\documentclass[11pt, oneside]{article} %
\usepackage{graphicx}
\usepackage{amsmath,amssymb}%
\usepackage{amsthm}
\newtheorem{theorem}{Theorem}%
\newtheorem{lemma}[theorem]{Lemma}%
\newtheorem{corollary}{Corollary}[theorem]%
\theoremstyle{definition}%
\newtheorem{definition}{Definition}%
\theoremstyle{remark}%
\newtheorem{remark}{Remark}%
\newtheorem{example}{Example}%

\title{Parametrizations of minimal timelike 
surfaces in the four-dimensional pseudo-Euclidean space of index two%
\footnote{Dedicated to Professor Yoshihiro Ohnita on the occasion of his retirement.}}%
\author{Katsuhiro Moriya\\Graduate School of Science, University of Hyogo,\\
2167 Shosha, Himeji, Hyogo, 671-2280, JAPAN\\
m905k019@guh.u-hyogo.ac.jp}
\date{Keywords: timelike surfaces; minimal surfaces; parametrizations\\
2020 Mathematical Subject Classification: 53B30, 53C42, 53A07}
\begin{document}
\maketitle
\begin{center}
{\scriptsize
This is the author's accepted manuscript (postprint) 
of a paper accepted for publication in the Hokkaido Mathematical Journal. 
The final published version will be available from the publisher.}
\end{center}
\abstract{
We construct representation formulas for local null curves in the four-dimensional pseudo-Euclidean space of index two and derive corresponding parametrizations for local minimal timelike surfaces without integration. 
As a special case of the representation formula, we construct a representation formula for local null curves in the three-dimensional pseudo-Euclidean space of index one that involves integration. 
Our results  provide examples of minimal timelike surfaces.}

\section{Introduction}

A pseudo-Euclidean space is a finite dimensional real vector space equipped with an indefinite inner product. 
The index of a pseudo-Euclidean space is the dimension of the maximal linear subspace spanned by vectors of negative scalar square. 
In this paper, the $n$-dimensional pseudo-Euclidean space of index $r$ is denoted by $\mathbb{E}^n_r$. 
The $n$-dimensional pseudo-Euclidean space of index $0$ is the $n$-dimensional Euclidean space $\mathbb{E}^n$. 
Pseudo-Euclidean spaces play important roles in differential geometry and physics. 
For example, spacetime in the theory of special relativity is the Minkowski space, which is $\mathbb{E}^4_1$. 

A timelike surface in a pseudo-Euclidean space is a surface where an indefinite inner product of index one is induced to each tangent vector space.  
A mean curvature vector of a timelike surface is a measure of extrinsic curvature. 
A minimal timelike surface is a timelike surface with vanishing mean curvature vector. 
In physics, minimal timelike surfaces in the Minkowski space is the most studied case. However, in mathematics, it is important to develop a theory of minimal timelike surfaces in all pseudo-Euclidean spaces, regardless of dimension or index.

While finding parameterizations of minimal timelike surfaces might be a natural first approach, it is generally a challenging problem. 
Obtaining the parameterizations requires a deeper understanding of the underlying structure of minimal timelike surfaces. 
Null curves in $\mathbb{E}^4_2$ play an important role in providing a parametrization.
A null curves in $\mathbb{E}^4_2$ is a curve with null tangent vector. 
B. Y. Chen referred to minimal timelike surfaces as Lorentzian minimal surfaces and provided the following classification.
\begin{theorem}[Chen \cite{zbMATH05568272}]\label{thm:Chen}
Let $\langle\enskip,\enskip\rangle$ be the inner product of $\mathbb{E}^4_2$ and $z(x)$ and $w(y)$ be two null curves defined on open intervals
$I_1$ and $I_2$ respectively in $\mathbb{E}^4_2$. 
If $\langle z(x), w(y)\rangle \neq  0$ for
$(x, y) \in I_1 \times  I_2$, then
\begin{align*}
\psi(x, y) = z(x) + w(y)
\end{align*}
defines a Lorentzian minimal surface in $\mathbb{E}^4_2$.
Conversely, locally every Lorentzian minimal surface in $\mathbb{E}^4_2$ is congruent to the
translation surface defined above.
\end{theorem}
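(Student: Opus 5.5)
The plan is to work throughout in \emph{null coordinates}, where both minimality and the translation structure become the single equation $\psi_{xy}=0$. I read the nondegeneracy hypothesis as $\langle z'(x), w'(y)\rangle\neq 0$. The condition on the position vectors themselves is not invariant under translation, so it cannot be what is meant; I take it as a typo for the derivative condition.

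\emph{Direct part.} Put $\psi(x,y)=z(x)+w(y)$. Then $\psi_x=z'$ and $\psi_y=w'$. The coefficients of the first fundamental form are
\[
E=\langle z',z'\rangle=0,\qquad G=\langle w',w'\rangle=0,\qquad F=\langle z',w'\rangle\neq 0 .
\]
\begin{itemize}
\item \emph{Immersion.} If $w'=c\,z'$ for some $c$, then $F=c\langle z',z'\rangle=0$, a contradiction. So $z'$ and $w'$ are linearly independent and $\psi$ is an immersion.
\item \emph{Timelike.} The induced metric is $2F\,dx\,dy$, which has index one.
\item \emph{Minimal.} In null coordinates the mean curvature vector is $H=(\psi_{xy})^{\perp}/F$, up to a constant factor. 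This follows by expressing the trace of the second fundamental form with respect to the inverse metric, whose only nonzero entries are the off-diagonal ones $1/F$. Since $\psi_{xy}=0$, we get $H=0$.
\end{itemize}

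\emph{Converse, step 1: null coordinates.} Let $\psi$ be a timelike minimal surface. Locally I construct coordinates $(x,y)$ with $E=G=0$. At each point the induced Lorentzian quadratic form on the tangent plane factors into two distinct real linear factors. Locally this gives two smooth, nonvanishing, pointwise independent null vector fields $V_1,V_2$; for instance, solve $E\,a^2+2F\,ab+G\,b^2=0$ for the two roots, treating the case where $E$ vanishes separately. Every line field on a surface is integrable, so near a point we can choose first integrals $y$ of $V_1$ and $x$ of $V_2$ with independent differentials. In the coordinates $(x,y)$ the coordinate directions are null, so $E=G=0$ and $F\neq 0$. I expect this step to be the main technical obstacle, mainly the care needed to get smooth null directions and independent first integrals. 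It is nevertheless the standard fact that a 2-dimensional Lorentzian metric is locally conformally flat in null form.

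\emph{Converse, step 2: integrate $\psi_{xy}=0$.} In these coordinates, minimality gives $(\psi_{xy})^{\perp}=0$. The tangential part also vanishes, because
\[
\langle\psi_{xy},\psi_x\rangle=\tfrac12\,\partial_y\langle\psi_x,\psi_x\rangle=0,\qquad
\langle\psi_{xy},\psi_y\rangle=\tfrac12\,\partial_x\langle\psi_y,\psi_y\rangle=0 .
\]
Since the tangent plane is nondegenerate, it is spanned by $\psi_x$ and $\psi_y$, so $\psi_{xy}$ has zero tangential component. Hence $\psi_{xy}=0$ on a coordinate rectangle, and integrating gives $\psi=z(x)+w(y)$. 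Here $z'=\psi_x$ and $w'=\psi_y$ are null, and $\langle z',w'\rangle=F\neq 0$. So the surface is, after reparametrization, exactly a translation surface of the stated form, which in particular gives congruence.
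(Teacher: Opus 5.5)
Your proof is correct. You are also right to read the hypothesis as $\langle z'(x),w'(y)\rangle\neq 0$: that is how Chen states it. The literal condition on position vectors does not suffice. For example, $z(x)=(1,0,0,0)+x(1,0,1,0)$ and $w(y)=(1,0,0,0)+y(1,0,1,0)$ near $(0,0)$ give $\langle z,w\rangle=1+x+y\neq 0$, yet $\psi=z+w$ has rank one.

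The paper itself gives no proof of this theorem. It quotes Chen, and in Section 2 it \emph{defines} a minimal timelike surface as a map admitting coordinates with $\langle f_{\xi_1},f_{\xi_1}\rangle=\langle f_{\xi_2},f_{\xi_2}\rangle=0$ and $f_{\xi_1\xi_2}=0$, allowing non-immersed points. With that definition both directions are immediate: the converse is just integration of $f_{\xi_1\xi_2}=0$, and the direct part needs no nondegeneracy condition at all.

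What your argument supplies is the link to the geometric notion (vanishing mean curvature vector) that the paper leaves to Chen:
\begin{itemize}
\item null coordinates exist on a timelike surface;
\item in null coordinates the mean curvature vector is $H=(\psi_{xy})^{\perp}/F$;
\item $E=G=0$ forces $\psi_{xy}\perp\psi_x,\psi_y$, which upgrades $H=0$ to $\psi_{xy}=0$.
\end{itemize}
This is the standard argument, essentially Chen's. Under the corrected hypothesis it also shows that $\psi$ is a genuine timelike immersion, so the non-immersed points the paper mentions never arise. Nothing in it uses that the ambient space is $\mathbb{E}^4_2$ rather than a general $\mathbb{E}^n_s$.
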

Here, Chen allowed surfaces to have points where they are not immersed, and still referred to them as surfaces. 
Points that are not immersed are precisely those where the two vectors $z'(x)$ and $w'(y)$ are linearly dependent. 
Following Chen, we refer to a surface that admits points which are not immersed simply as a \textit{surface}.
Then, the problem of giving a parametrization of a minimal timelike surface reduces to the problem of giving a parametrization of a null curve.

In this paper, a parametrization of a local null curve in $\mathbb{E}^4_2$ is constructed explicitly
using four real functions and their derivatives:
\begin{theorem}\label{thm:repnc}

Let $P_{ij}(\xi)$ $(i, j = 1, 2)$ be real-valued functions defined on an open interval $I$.  
Set  
\begin{align*}
p_1(\xi) &=
\begin{pmatrix}
P_{11}(\xi) & P_{12}(\xi)
\end{pmatrix}, \quad 
p_2(\xi) =
\begin{pmatrix}
P_{21}(\xi) & P_{22}(\xi)
\end{pmatrix}, \\
E &=
\begin{pmatrix}
1 & 0\\
0 & -1
\end{pmatrix}, \qquad
L =
\begin{pmatrix}
0 & 1\\
1 & 0
\end{pmatrix}.
\end{align*}
Assume that
\begin{align*}
\det
\begin{pmatrix}
p_2(\xi)\\
p_2'(\xi)
\end{pmatrix}
\neq 0.
\end{align*}
Then the map $\beta(\xi)$ defined by
\begin{align}
\beta(\xi) &=
\begin{pmatrix}
\beta_1(\xi)\\
\beta_2(\xi)\\
\beta_3(\xi)\\
\beta_4(\xi)
\end{pmatrix}
=
\frac{1}{4\det\begin{pmatrix}p_2(\xi)\\ p_2'(\xi)\end{pmatrix}}
\begin{pmatrix}
\phi(p_1(\xi), p_2(\xi))\\
\phi(p_1(\xi)E, p_2(\xi)L)\\
\phi(p_1(\xi)E, p_2(\xi))\\
\phi(p_1(\xi), p_2(\xi)L)
\end{pmatrix},
\label{eq:b}
\end{align}
is a null curve in $\mathbb{E}^4_2$.  
Here the map $\phi$ for two $\mathbb{R}^2$-valued functions  
$x(\xi)=\begin{pmatrix}x_1(\xi) & x_2(\xi)\end{pmatrix}$ and  
$y(\xi)=\begin{pmatrix}y_1(\xi) & y_2(\xi)\end{pmatrix}$ is defined by
\begin{align*}
\phi(x(\xi), y(\xi))
= -\det
\begin{pmatrix}
x(\xi)\\[1mm]
y'(\xi)
\end{pmatrix}
+\det
\begin{pmatrix}
x'(\xi)\\[1mm]
y(\xi)
\end{pmatrix}.
\end{align*}

Conversely, any given null curve $\beta(\xi)$ in $\mathbb{E}^4_2$ can be expressed in the form \eqref{eq:b},  
where the functions $P_{11}(\xi)$, $P_{12}(\xi)$, $P_{21}(\xi)$, $P_{22}(\xi)$ are given by
\begin{align}
\begin{aligned}
P_{11}(\xi)
&= -2\left\{(\beta_{1}(\xi) - \beta_{3}(\xi))(\beta_{2}'(\xi) + \beta_{4}'(\xi))\right.\\
&\left.-(\beta_{2}(\xi) + \beta_{4}(\xi))(\beta_{1}'(\xi) - \beta_{3}'(\xi))\right\}k(\xi),\\
P_{12}(\xi)
&= -2\left\{(\beta_{2}(\xi) - \beta_{4}(\xi))(\beta_{2}'(\xi) + \beta_{4}'(\xi))\right.\\
&\left.+(\beta_{1}(\xi) + \beta_{3}(\xi))(\beta_{1}'(\xi) - \beta_{3}'(\xi))\right\}k(\xi),\\
P_{21}(\xi)
&= (\beta_{2}'(\xi) + \beta_{4}'(\xi))k(\xi),\\
P_{22}(\xi)
&= (\beta_{1}'(\xi) - \beta_{3}'(\xi))k(\xi),
\end{aligned}
\label{eq:P}
\end{align}
and $k(\xi)$ is an arbitrary real function.
\end{theorem}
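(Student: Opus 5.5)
The plan is to make both directions computational by identifying $\mathbb{E}^4_2$ with $2\times 2$ real matrices whose quadratic form is the determinant, which has index two. I take the inner product to be $\beta_1^2+\beta_2^2-\beta_3^2-\beta_4^2$, with signature $(+,+,-,-)$; the whole plan rests on this convention, and if the paper uses another ordering the coordinates must be relabelled accordingly. I introduce the null coordinates $\beta_1\pm\beta_3$ and $\beta_2\pm\beta_4$. The null condition $\beta_1'^2+\beta_2'^2-\beta_3'^2-\beta_4'^2=0$ then becomes
\begin{align*}
(\beta_1'-\beta_3')(\beta_1'+\beta_3')+(\beta_2'-\beta_4')(\beta_2'+\beta_4')=0,
\end{align*}
which says that a suitable $2\times 2$ matrix built from $\beta'$ is singular. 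The factors $\beta_2'+\beta_4'$ and $\beta_1'-\beta_3'$ in \eqref{eq:P} are exactly one row of this matrix.

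Before either direction I record a scaling identity. For a real function $k$,
\begin{align*}
\phi(kx,ky)=k^2\phi(x,y),
\end{align*}
because the two terms $kk'\det(x,y)$ produced by the product rule cancel. Since $\det(kp_2,(kp_2)')=k^2\det(p_2,p_2')$, the right-hand side of \eqref{eq:b} is unchanged when every $P_{ij}$ is multiplied by the same $k$. This explains the arbitrary function $k$ in the converse and lets me set $k=1$ there.

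For the direct part, write $D=\det(p_2,p_2')$. I form the combinations $\beta_1\pm\beta_3$ and $\beta_2\pm\beta_4$ using bilinearity of $\phi$ and the identities $p_1\pm p_1E=(2P_{11},0)$ or $(0,2P_{12})$, and $p_2L=(P_{22},P_{21})$. Each null coordinate then becomes $\frac{1}{2D}$ times a simple expression such as $\phi((P_{11},0),p_2)$.

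Next I differentiate, using
\begin{align*}
\phi(x,y)'=-\det(x,y'')+\det(x'',y).
\end{align*}
The key reduction is that $p_2,p_2',p_2''$ lie in $\mathbb{R}^2$ and $D\neq 0$, so $p_2''=a\,p_2+b\,p_2'$ with
\begin{align*}
a=\frac{\det(p_2'',p_2')}{\det(p_2,p_2')},\qquad b=\frac{D'}{D}.
\end{align*}
After this substitution and the quotient rule, I expect each null-coordinate derivative to factor as (a scalar depending on $p_1$) times (a component of $p_2$). In that case the matrix of derivatives has rank at most one, and its determinant, which is $\langle\beta',\beta'\rangle$, vanishes. This determinant cancellation is the mechanism I expect; if it does not appear directly, I will expand both products in the null condition and use $p_2''=ap_2+bp_2'$ to cancel term by term.

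For the converse, let $\beta$ be null and take $k=1$. Define $p_2=(\beta_2'+\beta_4',\ \beta_1'-\beta_3')$, and define $p_1$ as in \eqref{eq:P}. In matrix language $p_1$ is a cross term between $\beta$ and $\beta'$. I compute $D=\det(p_2,p_2')$, which needs the nondegeneracy $D\neq0$; the statement tacitly assumes this, and I will state it explicitly.

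I then evaluate each $\phi$ in \eqref{eq:b}. The terms containing $\beta''$ should combine into multiples of $D$. The remaining terms should be removed using the null condition and its derivative
\begin{align*}
\beta_1'\beta_1''+\beta_2'\beta_2''-\beta_3'\beta_3''-\beta_4'\beta_4''=0,
\end{align*}
after which each component should collapse to $\beta_i$. I expect this converse verification to be the main obstacle: it is a long but finite polynomial identity. I plan to do it in null coordinates, where rank-one structure lets me write $\beta'=(\text{row }p_2)\otimes(\text{a column})$ with the column determined up to the null constraint. That reduces the check to $2\times2$ determinant identities.
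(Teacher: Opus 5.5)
Your direct part is correct, and it takes a genuinely different route from the paper. The paper writes $\beta_i'$ by the quotient rule and cancels $\langle\beta',\beta'\rangle$ using Lemma~\ref{lem:phi}, which gives quadratic identities for the signed sums $\sum\pm\phi_n^2$ and $\sum\pm\phi_1\phi$. Your substitution $p_2''=ap_2+bp_2'$ with $b=D'/D$ does produce the rank-one factorization you expect. With $c_j=P_{1j}''-aP_{1j}-bP_{1j}'$,
\[
(\beta_1+\beta_3)'=\frac{c_1P_{22}}{2D},\quad (\beta_2+\beta_4)'=\frac{c_1P_{21}}{2D},\quad (\beta_1-\beta_3)'=-\frac{c_2P_{21}}{2D},\quad (\beta_2-\beta_4)'=\frac{c_2P_{22}}{2D},
\]
so $\langle\beta',\beta'\rangle=(\beta_1+\beta_3)'(\beta_1-\beta_3)'+(\beta_2+\beta_4)'(\beta_2-\beta_4)'=0$. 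This is shorter than the lemma. Unlike the lemma, it also tells you which null combinations of $\beta'$ are parallel to $p_2$, and that is exactly what goes wrong next.

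The gap is in the converse. The step ``each component should collapse to $\beta_i$'' is false for $i=3$, so no amount of expansion will close it. From \eqref{eq:P},
\[
P_{11}=-2\{(\beta_1-\beta_3)P_{21}-(\beta_2+\beta_4)P_{22}\},\qquad P_{12}=-2\{(\beta_2-\beta_4)P_{21}+(\beta_1+\beta_3)P_{22}\}.
\]
Since $(\beta_1'-\beta_3')P_{21}-(\beta_2'+\beta_4')P_{22}\equiv0$ and $(\beta_2'-\beta_4')P_{21}+(\beta_1'+\beta_3')P_{22}=k\langle\beta',\beta'\rangle=0$, differentiating gives the same two relations between $(P_{11}',P_{12}')$ and $(P_{21}',P_{22}')$. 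Substituting into $\phi$ yields $\phi(p_1,p_2)=4D\beta_1$, $\phi(p_1E,p_2L)=4D\beta_2$ and $\phi(p_1,p_2L)=4D\beta_4$, but $\phi(p_1E,p_2)=-4D\beta_3$. So \eqref{eq:b} returns $(\beta_1,\beta_2,-\beta_3,\beta_4)$.

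Concretely, $P=(\xi^2,\xi^3,1,\xi)$ in \eqref{eq:b} gives the null curve $\beta=(-\xi^2/2,\,(\xi+\xi^3)/2,\,\xi^2,\,(\xi-\xi^3)/2)$. Feeding this into \eqref{eq:P} with $k=1$ gives $p_1=(-3\xi^2,\xi^3)$, $p_2=(1,-3\xi)$, $D=-3$ and $\phi(p_1E,p_2)=12\xi^2$. The third entry is therefore $-\xi^2$, not $\xi^2$.

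Your own forward formulas predict this. They give $((\beta_2+\beta_4)',(\beta_1+\beta_3)')=\frac{c_1}{2D}\,p_2$, whereas \eqref{eq:P} takes $p_2$ proportional to $((\beta_2+\beta_4)',(\beta_1-\beta_3)')$. The paper's proof checks only $\beta_1$ and dismisses the other components as a similar calculation, which is where this slips through.

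The repair is to replace $\beta_3$ by $-\beta_3$ throughout \eqref{eq:P}, which is the same as applying \eqref{eq:P} to the reflected null curve. On the example above this returns exactly $P=(\xi^2,\xi^3,1,\xi)$. With that correction, the two linear relations above (with $\beta_1+\beta_3$ and $\beta_1-\beta_3$ interchanged) make each component a two-line check giving $\beta_i$ whenever the resulting $D\neq0$. You then need neither the derivative of the null condition nor a rank-one parametrization of $\beta'$.
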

From Theorem \ref{thm:Chen} and Theorem \ref{thm:repnc}, we obtain the following corollary which provides a parametrization of a local minimal timelike surface in $\mathbb{E}^4_2$. 
\begin{corollary}\label{cor:repmt}
Let $\gamma_1(\xi_1)$ and $\gamma_2(\xi_2)$ be two null curves defined on open intervals $I_1$ and $I_2$, respectively, in $\mathbb{E}^4_2$, as given in the preceding theorem, and suppose that $\langle\gamma_1(\xi_1),\gamma_2(\xi_2)\rangle\neq 0$. Then the map $f\colon I_1\times I_2\to\mathbb{E}^4_2$ defined by 
$f (\xi_1,\xi_2) =\gamma_1(\xi_1) +\gamma_2(\xi_2)$ defines a minimal timelike surface on $I_1 \times I_2$. 
\end{corollary}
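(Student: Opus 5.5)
The corollary follows by feeding the output of Theorem~\ref{thm:repnc} into Theorem~\ref{thm:Chen}. The hypotheses of the two theorems already match, so the plan is to check that they do and to note what ``surface'' means here.

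First, fix data $P^{(1)}_{ij}(\xi_1)$ on $I_1$ and $P^{(2)}_{ij}(\xi_2)$ on $I_2$. Each family satisfies the nondegeneracy condition $\det\begin{pmatrix}p_2\\ p_2'\end{pmatrix}\neq 0$. By the first half of Theorem~\ref{thm:repnc}, the maps $\gamma_1(\xi_1)$ and $\gamma_2(\xi_2)$ built from formula \eqref{eq:b} are null curves in $\mathbb{E}^4_2$. The content of this step is only that formula \eqref{eq:b} gives well-defined smooth maps, which holds because the denominators never vanish, and that $\langle\gamma_k',\gamma_k'\rangle=0$; the latter is exactly the conclusion of Theorem~\ref{thm:repnc}.

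Next, apply Theorem~\ref{thm:Chen} with $z=\gamma_1$, $x=\xi_1$, $w=\gamma_2$, $y=\xi_2$. The remaining hypothesis $\langle\gamma_1(\xi_1),\gamma_2(\xi_2)\rangle\neq 0$ on $I_1\times I_2$ is assumed in the corollary, stated exactly as in Theorem~\ref{thm:Chen}. Chen's theorem then says that $f(\xi_1,\xi_2)=\gamma_1(\xi_1)+\gamma_2(\xi_2)$ is a Lorentzian, that is timelike, minimal surface.

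I would add a short remark on the meaning of these terms, since this is the only place needing care. Since $f_{\xi_1}=\gamma_1'$ and $f_{\xi_2}=\gamma_2'$ are null, the induced metric is $2\langle\gamma_1',\gamma_2'\rangle\,d\xi_1 d\xi_2$. This metric is Lorentzian wherever it is nondegenerate. Since $f_{\xi_1\xi_2}=0$, the mean curvature vector, which is proportional to $f_{\xi_1\xi_2}$ divided by the metric coefficient, vanishes.

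Points where $\gamma_1'$ and $\gamma_2'$ are linearly dependent are non-immersed points. Following the convention adopted after Theorem~\ref{thm:Chen}, these are still allowed in a ``surface''. Thus no further argument is needed, and I expect no real obstacle. The one point to state honestly is that the corollary inherits Chen's hypothesis and convention verbatim, rather than proving anything new about where the metric is nondegenerate.
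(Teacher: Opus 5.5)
Your proposal is correct and follows the paper's route: the paper obtains this corollary by combining Theorem~\ref{thm:repnc}, which says the curves built from \eqref{eq:b} are null, with Theorem~\ref{thm:Chen}. Your closing remark that $f_{\xi_1}=\gamma_1'$ and $f_{\xi_2}=\gamma_2'$ are null and $f_{\xi_1\xi_2}=0$ is exactly the paper's definition of a minimal timelike surface, so it settles the claim directly, and the hypothesis $\langle\gamma_1,\gamma_2\rangle\neq 0$ is simply inherited from Chen's statement.
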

While this corollary can be deduced easily as a paraquaternion variation of a result in \cite{MR2496508}, we provide a direct computational proof here.

We compare our results with those of previous results. 
Kassabov and Milousheva \cite{MR4169475}, Kanchev, Kassabov and Milousheva \cite{MR4369193}, Budinichi and Rigoli \cite{MR0996195}, 
Konderak \cite{MR2141751} and 
Dussan, Franco Filho and Magid \cite{MR3673665} presented various methods for obtaining parameterizations. 
These methods are variants of 
the Weierstrass-Enneper representation formula for minimal surfaces in $\mathbb{E}^3$ by Weierstrass \cite{Weierstrass1866} and Enneper \cite{Enneper1864} 
and that in $\mathbb{E}^4$ by Eisenhardt \cite{zbMATH02630736,zbMATH02626848}. 
In contrast to previous studies, this paper presents a parameterization for minimal timelike surfaces that avoids the use of integration. 
This approach offers a more direct reflection of the structure of such surfaces.

In the case $\beta_4(\xi)=0$, 
the null curve $\beta(\xi)$ becomes a null curve in $\mathbb{E}^3_1$. 
Under this restriction, Theorem \ref{thm:repnc} reduces to the following corollary 
that provides a parametrization of null curves in 
$\mathbb{E}^3_1$. 
\begin{corollary}\label{cor:null3}
Let $P_{12}(\xi)$, $P_{21}(\xi)$ and $P_{22}(\xi)$ be real functions on an open interval $I$
with
\begin{align*}
&
\det\begin{pmatrix}
P_{21}(\xi)&P_{22}(\xi)\\
P'_{21}(\xi)&P_{22}'(\xi)
\end{pmatrix}
\neq 0,
\quad
P_{21}(\xi)\neq 0.
\end{align*}
Then, the map 
$\beta(\xi)$ defined as follows is a null curve in $\mathbb{E}^3_1$ with non-constant $\beta_2(\xi)$;
\begin{align}
\begin{aligned}
&P_{11}(\xi)=P_{21}(\xi)\left(\int_{\xi_0}^\xi\frac{P_{12}'(\xi)P_{22}(\xi)-P_{12}(\xi)P_{22}'(\xi)}{(P_{21}(\xi))^2}d\xi+C\right)\\
&
\beta(\xi)=
\begin{pmatrix}
\beta_1(\xi)\\
\beta_2(\xi)\\
\beta_3(\xi)
\end{pmatrix}
=
\frac{1}{4\det\begin{pmatrix}p_2(\xi)\\p_2'(\xi)\end{pmatrix}}
\begin{pmatrix}
\phi(p_1(\xi), p_2(\xi))\\
\phi(p_1(\xi)E, p_2(\xi)L)\\
\phi(p_1(\xi)E, p_2(\xi))
\end{pmatrix},
\end{aligned}
\label{eq:b3}
\end{align}
where $\xi_0$ is an arbitrary point in $I$, and $C$ is an arbitrary constant. 

Conversely, any given null curve $\beta(\xi)$ in $\mathbb{E}^3_1$ with non-constant $\beta_2(\xi)$ can be expressed in the form \eqref{eq:b3},  
where the functions $P_{12}(\xi)$, $P_{21}(\xi)$, $P_{22}(\xi)$ are given by
\begin{align*}
P_{12}(\xi)=&-2\{\beta_{2}(\xi)\beta_{2}'(\xi)+(\beta_1(\xi)+\beta_3(\xi))(\beta_{1}'(\xi)-\beta_{3}'(\xi))\}k(\xi),\\
P_{21}(\xi)=&\beta_{2}'(\xi)k(\xi),\\
P_{22}(\xi)=&(\beta_{1}'(\xi)-\beta_{3}'(\xi))k(\xi) 
\end{align*}
and $k(\xi)$ is an arbitrary real function.
\end{corollary}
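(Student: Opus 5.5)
The plan is to obtain the corollary from Theorem \ref{thm:repnc} by imposing $\beta_4(\xi)=0$. I will assume, as the sentence preceding the statement indicates, that under this restriction the inner product of $\mathbb{E}^4_2$ restricts to a Lorentzian inner product on the remaining three coordinates. Then a null curve in $\mathbb{E}^4_2$ with $\beta_4\equiv 0$ is the same thing as a null curve in $\mathbb{E}^3_1$. So everything reduces to translating the condition $\beta_4=0$ into a condition on $P_{ij}$.

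\textbf{Translating $\beta_4=0$.} By \eqref{eq:b}, $\beta_4=0$ exactly when $\phi(p_1,p_2L)=0$, and $p_2L=\begin{pmatrix}P_{22}&P_{21}\end{pmatrix}$. Expanding the definition of $\phi$ gives
\begin{align*}
\phi(p_1,p_2L)=-(P_{11}P_{21}'-P_{12}P_{22}')+(P_{11}'P_{21}-P_{12}'P_{22}).
\end{align*}
Hence $\beta_4=0$ is equivalent to
\begin{align*}
P_{11}'P_{21}-P_{11}P_{21}'=P_{12}'P_{22}-P_{12}P_{22}'.
\end{align*}
When $P_{21}\neq 0$ we can divide by $P_{21}^2$. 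The left side becomes $(P_{11}/P_{21})'$, so the condition says
\begin{align*}
\left(\frac{P_{11}}{P_{21}}\right)'=\frac{P_{12}'P_{22}-P_{12}P_{22}'}{P_{21}^2}.
\end{align*}
Integrating from $\xi_0$ shows that this holds if and only if $P_{11}$ has the integral form in \eqref{eq:b3} for some constant $C$.

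\textbf{Forward direction.} Given $P_{12},P_{21},P_{22}$ satisfying the hypotheses, define $P_{11}$ by \eqref{eq:b3}. Theorem \ref{thm:repnc} applies, because its determinant hypothesis involves only $p_2$. It yields a null curve in $\mathbb{E}^4_2$ whose fourth component vanishes, i.e.\ a null curve in $\mathbb{E}^3_1$. It remains to show that $\beta_2$ is non-constant.

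For this I will differentiate \eqref{eq:b} directly, as in the proof of Theorem \ref{thm:repnc}. I expect $\beta_2'+\beta_4'$ and $\beta_1'-\beta_3'$ to equal $P_{21}$ and $P_{22}$, respectively, times a common nonvanishing factor. This is what the inverse formulas \eqref{eq:P} suggest, with $k$ being the reciprocal of that factor. With $\beta_4=0$ and $P_{21}\neq0$, this gives $\beta_2'\neq 0$.

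This derivative computation is the step I expect to be the main obstacle. Without the first-order identities from the proof of Theorem \ref{thm:repnc}, it is a messy expansion of second derivatives of the $P_{ij}$. The first thing I would try is to reuse those identities rather than recompute.

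\textbf{Converse.} Let $\beta$ be a null curve in $\mathbb{E}^3_1$ with $\beta_2$ non-constant. Regard it as a null curve in $\mathbb{E}^4_2$ with $\beta_4=0$, and work locally on a subinterval where $\beta_2'\neq 0$. The converse of Theorem \ref{thm:repnc} produces $P_{ij}$ via \eqref{eq:P}, and setting $\beta_4=0$ there gives exactly the stated formulas for $P_{12},P_{21},P_{22}$.

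Choose $k$ nowhere zero. Then $P_{21}=\beta_2'k\neq0$, and the determinant condition on $p_2$ is inherited from the representation in Theorem \ref{thm:repnc}. Since $\beta_4=0$, the equivalence established above forces $P_{11}$ to satisfy the ODE. Therefore $P_{11}$ is of the integral form in \eqref{eq:b3} for a suitable constant $C$, namely $C=P_{11}(\xi_0)/P_{21}(\xi_0)$. This shows that $\beta$ is expressed in the form \eqref{eq:b3}.
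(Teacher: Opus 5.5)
\textbf{Where your proposal matches the paper.} Your treatment of $\beta_4=0$ is exactly the paper's proof: you expand $\phi(p_1,p_2L)$, rewrite the condition as $(P_{11}/P_{21})'=(P_{12}'P_{22}-P_{12}P_{22}')/P_{21}^2$, and integrate. The paper's proof contains nothing beyond this step. It addresses neither the non-constancy of $\beta_2$ nor the details of the converse, and both of your additional steps have genuine gaps.

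\textbf{The non-constancy step cannot be completed.} The identity you expect is false. The proportionality of $(\beta_2'+\beta_4',\beta_1'-\beta_3')$ to $(P_{21},P_{22})$ is built into the particular choice \eqref{eq:P}. It is not a property of \eqref{eq:b} for general $P_{ij}$. Take the paper's own $\alpha_4$, where $P_{21}=e^{2\xi}$ and $P_{22}=e^{\xi}$ are both positive. There $\beta_2'=-3e^{2\xi}<0<3e^{3\xi}=\beta_1'-\beta_3'$, so no common factor exists. A direct computation from \eqref{eq:b} gives instead
\[
(\beta_2'+\beta_4',\ \beta_1'+\beta_3')=\frac{W(P_{21},P_{22},P_{11})}{2D^2}\,(P_{21},P_{22}),\qquad D=\det\begin{pmatrix}p_2\\ p_2'\end{pmatrix},
\]
where $W(f,g,h)=\det\begin{pmatrix}f&g&h\\ f'&g'&h'\\ f''&g''&h''\end{pmatrix}$. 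This factor may vanish identically.

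In fact the non-constancy assertion is false as stated. The choice $P_{21}=1$, $P_{22}=\xi$, $P_{12}=0$ satisfies both hypotheses and forces $P_{11}=C$. Then \eqref{eq:b3} returns the constant curve $-\frac{C}{4}(1,0,1)$. More generally, when $\beta_4=0$ one finds $\beta_2'=P_{22}\,W(P_{21},P_{22},P_{12})/(2D^2)$. Hence $\beta_2$ is constant exactly when $P_{12}$ is a constant-coefficient combination of $P_{21}$ and $P_{22}$. An extra hypothesis excluding this case is needed; the claim cannot be proved from the stated hypotheses.

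\textbf{The converse has two further problems.} First, your claim that the determinant condition on $p_2$ is ``inherited'' from Theorem~\ref{thm:repnc} is unjustified, because that theorem's converse never verifies it. It fails for the null line $(\xi\cos\theta,\xi\sin\theta,\xi)$ with $\sin\theta\neq0$. There $p_2=k\,w$ with $w$ constant, so $\det\begin{pmatrix}p_2\\ p_2'\end{pmatrix}=0$ for every $k$, even though $\beta_2$ is non-constant. This nondegeneracy must be assumed.

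Second, using \eqref{eq:P} as a black box is unsafe. Substituting \eqref{eq:P} into \eqref{eq:b} returns $(\beta_1,\beta_2,-\beta_3,\beta_4)$ rather than $\beta$. You can check this on $\beta=(\xi^3/3-\xi,\ \xi^2,\ \xi^3/3+\xi)$ with $k=-1/2$. This gives $P_{11}=-2\xi^2$, $P_{12}=\frac{2}{3}\xi^3$, $P_{21}=-\xi$, $P_{22}=1$, and $D=1$. The formula then yields third component $-\xi^3/3-\xi$. So in the stated formulas for $P_{12}$ and $P_{22}$, $\beta_3$ must be replaced by $-\beta_3$. This sign slip is also why the first-order identity you anticipated actually holds with $\beta_1'+\beta_3'$ in place of $\beta_1'-\beta_3'$.
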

From Theorem \ref{thm:Chen} and Corollary \ref{cor:null3}, we obtain the following corollary which provides a parametrization of a minimal timelike surface in $\mathbb{E}^3_1$. 
\begin{corollary}\label{cor:repmt3}
Let $\gamma_1(\xi_1)$ and $\gamma_2(\xi_2)$ be two null curves  defined on open intervals $I_1$ and $I_2$, respectively, in $\mathbb{E}^3_1$, as given in the preceding corollary, 
and suppose that $\langle\gamma_1(\xi_1),\gamma_2(\xi_2)\rangle\neq 0$. 
Then the map $f\colon I_1\times I_2\to\mathbb{E}^3_1$ defined by 
$f(\xi_1,\xi_2) =\gamma_1(\xi_1) + \gamma_2(\xi_2)$ 
defines a minimal timelike surface on $I_1 \times I_2$ with non-constant $f_2$, where 
$f_2$ denotes the second coordinate function of 
$f$. 
\end{corollary}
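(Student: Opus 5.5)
The plan is to combine Corollary \ref{cor:null3} with the translation-surface mechanism of Theorem \ref{thm:Chen}, after viewing $\mathbb{E}^3_1$ as the hyperplane $\{\beta_4=0\}$ of $\mathbb{E}^4_2$. For $\gamma_1$ and $\gamma_2$ as in Corollary \ref{cor:null3}, the embedded curves $(\gamma_i,0)$ are null curves in $\mathbb{E}^4_2$. They are also exactly the curves produced by Theorem \ref{thm:repnc} with $\beta_4=0$, which is how Corollary \ref{cor:null3} was obtained. The inner product of $\mathbb{E}^3_1$ is the restriction of that of $\mathbb{E}^4_2$, so the hypothesis $\langle\gamma_1(\xi_1),\gamma_2(\xi_2)\rangle\neq0$ carries over unchanged. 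Corollary \ref{cor:repmt} (or Theorem \ref{thm:Chen} directly) then says that $f=\gamma_1+\gamma_2$ is a minimal timelike surface in $\mathbb{E}^4_2$, and its image lies in $\{\beta_4=0\}\cong\mathbb{E}^3_1$.

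To be safe, I would also give the direct computation, following the paper's promise of computational proofs. Write $f_{\xi_1}=\gamma_1'$ and $f_{\xi_2}=\gamma_2'$. The nullity of $\gamma_1$ and $\gamma_2$ gives $\langle f_{\xi_1},f_{\xi_1}\rangle=\langle f_{\xi_2},f_{\xi_2}\rangle=0$, so the induced metric is $2\langle\gamma_1',\gamma_2'\rangle\,d\xi_1d\xi_2$. Wherever $\langle\gamma_1',\gamma_2'\rangle\neq0$ this is a Lorentzian metric, and $(\xi_1,\xi_2)$ are null coordinates. Because $f_{\xi_1\xi_2}=0$, the tension field $4f_{\xi_1\xi_2}/(2\langle\gamma_1',\gamma_2'\rangle)$ vanishes, so the mean curvature vector is zero. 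At points where $\gamma_1'$ and $\gamma_2'$ are dependent, $f$ fails to be immersed; this is allowed under the convention adopted after Theorem \ref{thm:Chen}.

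Finally, $f_2(\xi_1,\xi_2)=\gamma_{1,2}(\xi_1)+\gamma_{2,2}(\xi_2)$. By Corollary \ref{cor:null3}, $\gamma_{1,2}$ is non-constant, so $\partial f_2/\partial\xi_1=\gamma_{1,2}'(\xi_1)$ is not identically zero. Hence $f_2$ is non-constant on $I_1\times I_2$.

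The main obstacle is the interpretation of the nondegeneracy hypothesis. Timelikeness really requires $\langle\gamma_1',\gamma_2'\rangle\neq0$. The stated condition $\langle\gamma_1,\gamma_2\rangle\neq0$ is inherited verbatim from Theorem \ref{thm:Chen}, so I would follow the paper and invoke Theorem \ref{thm:Chen} and Corollary \ref{cor:repmt} exactly as stated. Where a direct argument is needed, I would read the hypothesis as the derivative condition, which is the case in which the metric computation above applies.
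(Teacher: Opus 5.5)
Your proposal is correct and follows the paper's route. The paper simply derives this corollary from Theorem~\ref{thm:Chen} and Corollary~\ref{cor:null3}; your embedding into $\{\beta_4=0\}\subset\mathbb{E}^4_2$, the direct check that $\langle f_{\xi_1},f_{\xi_1}\rangle=\langle f_{\xi_2},f_{\xi_2}\rangle=0$ and $f_{\xi_1\xi_2}=0$, and the appeal to Corollary~\ref{cor:null3} for non-constancy of $f_2$ just spell that out, and reading the hypothesis as $\langle\gamma_1',\gamma_2'\rangle\neq0$ is the sensible choice.

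One caveat, which you share with the paper: the non-constancy step rests on the forward claim of Corollary~\ref{cor:null3}, and that claim fails, e.g.\ when $P_{12}$ is a constant multiple of $P_{22}$ (then $\beta_2\equiv0$ and in fact the whole curve is constant). What actually secures non-constant $f_2$ is your derivative hypothesis, which rules out constant $\gamma_i$, together with a short check that a constant $\beta_2$ in \eqref{eq:b3} forces $p_1=p_2M$ for a constant matrix $M$, and hence a constant curve.
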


We construct examples of  null curves using  Theorem~\ref{thm:repnc} and Corollary~\ref{cor:null3} and minimal timelike surfaces using Corollary~\ref{cor:repmt} and Corollary~\ref{cor:repmt3}.

\section{Null curves and minimal timelike surfaces}
Throughout this paper, we assume that 
maps are smooth, unless explicitly mentioned otherwise. 
In this section, we review null curves and minimal timelike surfaces. 

Let $\mathbb{R}$ be the algebra of real numbers and $\mathbb{R}^n=\{(x_1,x_2,\ldots,x_{n}):x_i\in\mathbb{R}\quad(i=1,2,\ldots,n)\}$. 
The $4$-dimensional pseudo-Euclidean space with index $2$ $\mathbb{E}^4_2$ is  $\mathbb{R}^{4}$ 
with indefinite inner product $\langle\enskip,\enskip\rangle$ defined by 
\begin{align*}
\langle (x_1,x_2,x_3,x_{4}),(y_1,y_2,y_3,y_{4})\rangle=x_1y_1+x_2y_2-x_3y_3-x_4y_4. 
\end{align*}
for $ (x_1,x_2,x_3,x_{4})$, $(y_1,y_2,y_3,y_{4})\in \mathbb{R}^4$. 

Let $I$ be an open interval and $u$ be a parameter of $I$. 
\begin{definition}
A map $c\colon I\to\mathbb{E}^4_2$ is called \textit{null} 
if 
\begin{align}
&\langle c_{u},c_{u}\rangle=0.\label{eq:nc}
\end{align}
\end{definition}

Let $U$ be a simply-connected open set of $\mathbb{R}^2$. 
\begin{definition}
A map $f\colon U\to\mathbb{E}^4_2$ is called a \textit{minimal timelike surface} 
if there is a coordinate $(\xi_1,\xi_2)$ of $U$ such that 
\begin{align}
&\langle f_{\xi_1},f_{\xi_1}\rangle=\langle f_{\xi_2},f_{\xi_2}\rangle=0,\quad f_{\xi_1\xi_2}=0. \label{eq:mts2}
\end{align}
\end{definition}
The coordinate $(\xi_1,\xi_2)$ is a null coordinate of the induced metric on $U$.  

\section{Proof of theorems}
Firstly, we prove a lemma for the calculation later. 
Put
\begin{align*}
&\phi_n(x(\xi),y(\xi))=
(\phi(x(\xi),y(\xi)))^{(n)}\quad (n=0,1),\\
&X(\xi)=\begin{pmatrix}x_1(\xi)&y_2(\xi)\end{pmatrix},\quad Y(\xi)=\begin{pmatrix}y_1(\xi)&x_2(\xi)\end{pmatrix},\\
&U(\xi)=\begin{pmatrix}x_1(\xi)&y_1(\xi)\end{pmatrix},\quad V(\xi)=\begin{pmatrix}y_2(\xi)&x_2(\xi)\end{pmatrix}.
\end{align*}
Then 
\begin{align*}
&\phi_n(x(\xi),y(\xi))\\
&=- \det\begin{pmatrix}x(\xi)\\y^{(n+1)}(\xi)\end{pmatrix} + \det\begin{pmatrix}x^{(n+1)}(\xi)\\ y(\xi)\end{pmatrix}\\
&=- \det\begin{pmatrix}X(\xi)\\X^{(n+1)}(\xi)\end{pmatrix} + \det\begin{pmatrix}Y^{(n+1)}(\xi)\\ Y(\xi)\end{pmatrix},\\
&\phi_n(x(\xi),y(\xi)L)\\
&=- \det\begin{pmatrix}U(\xi)\\U^{(n+1)}(\xi)\end{pmatrix} + \det\begin{pmatrix}V^{(n+1)}(\xi)\\ V(\xi)\end{pmatrix}. 
\end{align*}
\begin{lemma}\label{lem:phi}
The following identities hold:
\begin{align}
&
\begin{aligned}
&(\phi_n(x(\xi),y(\xi)))^2+(\phi_n(x(\xi)E,y(\xi)L))^2\\
&-(\phi_n(x(\xi)E,y(\xi)))^2-(\phi_n(x(\xi),y(\xi)L))^2\\
&=-4\det\begin{pmatrix}x(\xi)\\x^{(n+1)}(\xi)\end{pmatrix}\det\begin{pmatrix}y(\xi)\\y^{(n+1)}(\xi)\end{pmatrix},
\end{aligned}
\label{lem:eq1}
\\
&
\begin{aligned}
&\phi_1(x(\xi),y(\xi))\phi(x(\xi),y(\xi))+\phi_1(x(\xi)E,y(\xi)L)\phi(x(\xi)E,y(\xi)L)\\
&-\phi_1(x(\xi)E,y(\xi))\phi(x(\xi)E,y(\xi))-\phi_1(x(\xi),y(\xi)L)\phi(x(\xi),y(\xi)L)\\
&=-2\det\begin{pmatrix}x(\xi)\\x''(\xi)\end{pmatrix}\det\begin{pmatrix}y(\xi)\\y'(\xi)\end{pmatrix}-2\det\begin{pmatrix}x(\xi)\\x'(\xi)\end{pmatrix}\det\begin{pmatrix}y(\xi)\\y''(\xi)\end{pmatrix}.
\end{aligned}
\label{lem:eq2}
\end{align}
\end{lemma}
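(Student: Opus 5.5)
The plan is to reduce both identities to a bilinear computation in the scalar brackets
\[
[f,g]_n=f\,g^{(n+1)}-f^{(n+1)}g,
\]
for real functions $f,g$ of $\xi$. I prove \eqref{lem:eq1} by direct expansion and obtain \eqref{lem:eq2} from \eqref{lem:eq1} with $n=0$ by differentiation.

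\emph{Step 1.} Write $x=(x_1\ x_2)$ and $y=(y_1\ y_2)$. Expanding the determinants in the definition of $\phi_n$ gives
\[
\phi_n(x,y)=-[x_1,y_2]_n+[x_2,y_1]_n .
\]
Since $xE=(x_1\ {-x_2})$ and $yL=(y_2\ y_1)$, the same formula yields
\[
\phi_n(xE,yL)=-[x_1,y_1]_n-[x_2,y_2]_n,\qquad
\phi_n(xE,y)=-[x_1,y_2]_n-[x_2,y_1]_n,
\]
\[
\phi_n(x,yL)=-[x_1,y_1]_n+[x_2,y_2]_n .
\]
This is the same bookkeeping the paper records through the pairs $X,Y$ and $U,V$.

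\emph{Step 2.} Group the left-hand side of \eqref{lem:eq1} as two differences of squares. The first is $\phi_n(x,y)^2-\phi_n(xE,y)^2$, and the second is $\phi_n(xE,yL)^2-\phi_n(x,yL)^2$. Using $(a+b)^2-(a-b)^2=4ab$ on each, the left-hand side becomes
\[
4\bigl([x_1,y_1]_n[x_2,y_2]_n-[x_1,y_2]_n[x_2,y_1]_n\bigr).
\]

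\emph{Step 3.} Expand each bracket as $x_iy_j^{(n+1)}-x_i^{(n+1)}y_j$. The terms $x_1x_2\,y_1^{(n+1)}y_2^{(n+1)}$ cancel between the two products, and so do the terms $x_1^{(n+1)}x_2^{(n+1)}y_1y_2$. The four remaining mixed terms should factor as
\[
-(x_1x_2^{(n+1)}-x_1^{(n+1)}x_2)(y_1y_2^{(n+1)}-y_1^{(n+1)}y_2)
=-\det\begin{pmatrix}x\\x^{(n+1)}\end{pmatrix}\det\begin{pmatrix}y\\y^{(n+1)}\end{pmatrix}.
\]
Together with the factor $4$ from Step 2, this gives \eqref{lem:eq1}. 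This is a Cauchy--Binet-type identity. Checking this sign-sensitive factorization is the only step I expect to need real care.

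\emph{Step 4.} For \eqref{lem:eq2}, observe that $\phi_1=(\phi_0)'$ by definition. Hence the left-hand side of \eqref{lem:eq2} is exactly $\tfrac12\frac{d}{d\xi}$ of the left-hand side of \eqref{lem:eq1} with $n=0$. Differentiating the right-hand side $-4\det\begin{pmatrix}x\\x'\end{pmatrix}\det\begin{pmatrix}y\\y'\end{pmatrix}$ uses $\det\begin{pmatrix}x\\x'\end{pmatrix}'=\det\begin{pmatrix}x\\x''\end{pmatrix}$, because the $\det\begin{pmatrix}x'\\x'\end{pmatrix}$ term vanishes. The same holds for $y$. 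Halving the result gives precisely the right-hand side of \eqref{lem:eq2}, so no separate expansion is needed.
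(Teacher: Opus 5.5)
Your Steps 1, 2 and 4 are correct and follow the paper's route. Your brackets are, up to sign, the paper's determinants of $X,Y,U,V$, and the grouping into two differences of squares is the same. As in the paper, \eqref{lem:eq2} comes from differentiating the $n=0$ case of \eqref{lem:eq1}; you rightly say $n=0$, and the paper's ``for $n=1$'' is a slip.

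The problem is Step 3, which you flagged but did not carry out. With $N=n+1$, the four surviving terms give
\[
[x_1,y_1]_n[x_2,y_2]_n-[x_1,y_2]_n[x_2,y_1]_n
=x_1x_2^{(N)}\bigl(y_1y_2^{(N)}-y_1^{(N)}y_2\bigr)-x_1^{(N)}x_2\bigl(y_1y_2^{(N)}-y_1^{(N)}y_2\bigr)
=\det\begin{pmatrix}x\\x^{(N)}\end{pmatrix}\det\begin{pmatrix}y\\y^{(N)}\end{pmatrix},
\]
with a plus sign, not a minus. So your Steps 1--3 actually show that the left-hand side of \eqref{lem:eq1} equals $+4\det\begin{pmatrix}x\\x^{(n+1)}\end{pmatrix}\det\begin{pmatrix}y\\y^{(n+1)}\end{pmatrix}$, and the identity as printed is false. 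A concrete check: take $n=0$ and $x=y=\begin{pmatrix}1&\xi\end{pmatrix}$. Then $\phi(x,y)=-2$ and $\phi(xE,yL)=\phi(xE,y)=\phi(x,yL)=0$, so the left-hand side is $4$, whereas the stated right-hand side is $-4\cdot 1\cdot 1=-4$. Your write-up reaches the printed sign only by asserting (``should factor as'') a factorization that does not hold.

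The paper's proof has the same slip at ``adding these two equations term by term''. Its intermediate results $4\det X\det Y$ and $-4\det U\det V$ (determinants taken against the $(n+1)$-st derivatives) are right, but their sum is $+4\det\begin{pmatrix}x\\x^{(n+1)}\end{pmatrix}\det\begin{pmatrix}y\\y^{(n+1)}\end{pmatrix}$. What can be proved is therefore \eqref{lem:eq1} with $+4$. Via your Step 4, \eqref{lem:eq2} then has right-hand side $+2\det\begin{pmatrix}x\\x''\end{pmatrix}\det\begin{pmatrix}y\\y'\end{pmatrix}+2\det\begin{pmatrix}x\\x'\end{pmatrix}\det\begin{pmatrix}y\\y''\end{pmatrix}$.

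The discrepancy is harmless downstream. In the proof of Theorem~\ref{thm:repnc}, the identities used there (namely \eqref{lem:eq1} for $n=0$ and $n=1$ together with \eqref{lem:eq2}) enter only through a combination that cancels identically whichever common sign they carry, so the null-curve conclusion stands. Your method is fine: redo Step 3 explicitly and state the corrected signs rather than the printed ones.
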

\begin{proof}
We have 
\begin{align*}
\phi_n&(x(\xi),y(\xi))^2-\phi_n(x(\xi)E,y(\xi))^2\\
=&(\phi_n(x(\xi),y(\xi))-\phi_n(x(\xi)E,y(\xi)))\\
&\times(\phi_n(x(\xi),y(\xi))+\phi_n(x(\xi)E,y(\xi)))\\
=&\left(-2x_1(\xi)y_2^{(n+1)}(\xi)+2x_1^{(n+1)}(\xi)y_2(\xi)\right)\\
&\times \left(2x_2(\xi)y_1^{(n+1)}(\xi)-2x_2^{(n+1)}(\xi)y_1(\xi)\right)\\
=&4\det\begin{pmatrix}X(\xi)\\X^{(n+1)}(\xi)\end{pmatrix}\det\begin{pmatrix}Y(\xi)\\Y^{(n+1)}(\xi)\end{pmatrix}. 
\end{align*}
Similarly, we have 
\begin{align*}
&-\phi_n(x(\xi),y(\xi)L)^2+\phi_n(x(\xi)E,y(\xi)L)^2\\
&=-4\det\begin{pmatrix}U(\xi)\\U^{(n+1)}(\xi)\end{pmatrix}\det\begin{pmatrix}V(\xi)\\V^{(n+1)}(\xi)\end{pmatrix}. 
\end{align*}
By adding these two equations term by term, we obtain \eqref{lem:eq1}. 
Differentiating both sides of this equation for $n=1$, we have 
\begin{align*}
&2\phi_1(x(\xi),y(\xi))\phi(x(\xi),y(\xi))+2\phi_1(x(\xi)E,y(\xi)L)\phi(x(\xi)E,y(\xi)L)\\
&-2\phi_1(x(\xi)E,y(\xi))\phi(x(\xi)E,y(\xi))-2\phi_1(x(\xi),y(\xi)L)\phi(x(\xi),y(\xi)L)\\
&=-4\det\begin{pmatrix}x(\xi)\\x^{(2)}(\xi)\end{pmatrix}\det\begin{pmatrix}y(\xi)\\y^{(1)}(\xi)\end{pmatrix}-4\det\begin{pmatrix}x(\xi)\\x^{(1)}(\xi)\end{pmatrix}\det\begin{pmatrix}y(\xi)\\y^{(2)}(\xi)\end{pmatrix}.
\end{align*}
This is equivalent to \eqref{lem:eq2}. 
\end{proof}

\begin{proof}[Proof of  Theorem~\ref{thm:repnc}]
The derivatives  of $\beta_1(\xi)$, $\beta_2(\xi)$, $\beta_3(\xi)$ and $\beta_4(\xi)$ are  
\begin{align*}
&\beta_1'(\xi)=
\frac{\phi_1(p_1(\xi), p_2(\xi))\det\begin{pmatrix}p_2(\xi)\\p_2'(\xi)\end{pmatrix}-\phi(p_1(\xi), p_2(\xi))\det\begin{pmatrix}p_2(\xi)\\p_2''(\xi)\end{pmatrix}}{4\det\begin{pmatrix}p_2(\xi)\\p_2'(\xi)\end{pmatrix}^2},\\
&\beta_2'(\xi)=
\frac{\phi_1(p_1(\xi)E, p_2(\xi)L)\det\begin{pmatrix}p_2(\xi)\\p_2'(\xi)\end{pmatrix}-\phi(p_1(\xi)E, p_2(\xi)L)\det\begin{pmatrix}p_2(\xi)\\p_2''(\xi)\end{pmatrix}}{4\det\begin{pmatrix}p_2(\xi)\\p_2'(\xi)\end{pmatrix}^2},\\
&\beta_3'(\xi)=
\frac{\phi_1(p_1(\xi)E, p_2(\xi))\det\begin{pmatrix}p_2(\xi)\\p_2'(\xi)\end{pmatrix}-\phi(p_1(\xi)E, p_2(\xi))\det\begin{pmatrix}p_2(\xi)\\p_2''(\xi)\end{pmatrix}}{4\det\begin{pmatrix}p_2(\xi)\\p_2'(\xi)\end{pmatrix}^2},\\
&\beta_4'(\xi)=
\frac{\phi_1(p_1(\xi), p_2(\xi)L)\det\begin{pmatrix}p_2(\xi)\\p_2'(\xi)\end{pmatrix}-\phi(p_1(\xi), p_2(\xi)L)\det\begin{pmatrix}p_2(\xi)\\p_2''(\xi)\end{pmatrix}}{4\det\begin{pmatrix}p_2(\xi)\\p_2'(\xi)\end{pmatrix}^2}. 
\end{align*}
By Lemma \ref{lem:phi}, we have 
\begin{align*}
&16\det\begin{pmatrix}p_2(\xi)\\p_2'(\xi)\end{pmatrix}^2\left((\beta'_1(\xi))^2+(\beta'_2(\xi))^2-(\beta'_3(\xi))^2-(\beta'_4(\xi))^2\right)\\
&=-4\det\begin{pmatrix}p_1(\xi)\\p_1''(\xi)\end{pmatrix}\det\begin{pmatrix}p_2(\xi)\\p_2''(\xi)\end{pmatrix}\det\begin{pmatrix}p_2(\xi)\\p_2'(\xi)\end{pmatrix}^2\\
&-2\left(-2\det\begin{pmatrix}p_1(\xi)\\p_1''(\xi)\end{pmatrix}\det\begin{pmatrix}p_2(\xi)\\p_2'(\xi)\end{pmatrix}-2\det\begin{pmatrix}p_1(\xi)\\p_1'(\xi)\end{pmatrix}\det\begin{pmatrix}p_2(\xi)\\p_2''(\xi)\end{pmatrix}\right)\\
&\times \det\begin{pmatrix}p_2(\xi)\\p_2'(\xi)\end{pmatrix}\det\begin{pmatrix}p_2(\xi)\\p_2''(\xi)\end{pmatrix}\\
&-4\det\begin{pmatrix}p_1(\xi)\\p_1'(\xi)\end{pmatrix}\det\begin{pmatrix}p_2(\xi)\\p_2'(\xi)\end{pmatrix}\det\begin{pmatrix}p_2(\xi)\\p_2''(\xi)\end{pmatrix}^2=0. 
\end{align*}
Hence, $\beta(\xi)$ is a null curve. 

For the proof of the converse, we see that the following equations hold from \eqref{eq:P}:
\begin{align*}
&(\beta_1'(\xi)-\beta_3'(\xi))P_{21}(\xi)-(\beta_2'(\xi)+\beta_{4}'(\xi))P_{22}(\xi)=0,\\
&-2\{(\beta_1(\xi)-\beta_3(\xi))P_{21}(\xi)-(\beta_2(\xi)+\beta_4(\xi))P_{22}(\xi)\}=P_{11}(\xi),\\
&-2\{(\beta_2(\xi)-\beta_4(\xi))P_{21}(\xi)+(\beta_1(\xi)+\beta_3(\xi))P_{22}(\xi)\}=P_{12}(\xi). 
\end{align*}
By differentiating both sides of the second and third equations, and using the first equation together with the null curve condition, we obtain 
\begin{align*}
&-2\{(\beta_1(\xi)-\beta_3(\xi))P_{21}'(\xi)-(\beta_2(\xi)+\beta_4(\xi))P_{22}'(\xi)\}=P_{11}'(\xi),\\
&-2\{(\beta_2(\xi)-\beta_4(\xi))P_{21}'(\xi)+(\beta_1(\xi)+\beta_3(\xi))P_{22}'(\xi)\}=P_{12}'(\xi). 
\end{align*}
Then 
\begin{align*}
&-
\det\begin{pmatrix}
P_{11}(\xi)&P_{12}(\xi)\\
P_{21}'(\xi)&P_{22}'(\xi)
\end{pmatrix}
\\
&=
2\{(\beta_1(\xi)-\beta_3(\xi))P_{21}(\xi)P_{22}'(\xi)-(\beta_2(\xi)+\beta_4(\xi))P_{22}(\xi)P_{22}'(\xi)\}\\
&-2\{(\beta_2(\xi)-\beta_4(\xi))P_{21}(\xi)P_{21}'(\xi)+(\beta_1(\xi)+\beta_3(\xi))P_{22}(\xi)P_{21}'(\xi)\},\\
&
\det\begin{pmatrix}
P_{11}'(\xi)&P_{12}'(\xi)\\
P_{21}(\xi)&P_{22}(\xi)
\end{pmatrix}
\\
&=-2\{(\beta_1(\xi)-\beta_3(\xi))P_{21}'(\xi)P_{22}(\xi)-(\beta_2(\xi)+\beta_4(\xi))P_{22}'(\xi)P_{22}(\xi)\}\\
&+2\{(\beta_2(\xi)-\beta_4(\xi))P_{21}'(\xi)P_{21}(\xi)+(\beta_1(\xi)+\beta_3(\xi))P_{22}'(\xi)P_{21}(\xi)\}.
\end{align*}
By adding these two equations term by term, we obtain
\begin{align*}
&-
\det\begin{pmatrix}
P_{11}(\xi)&P_{12}(\xi)\\
P_{21}'(\xi)&P_{22}'(\xi)
\end{pmatrix}
+
\det\begin{pmatrix}
P_{11}'(\xi)&P_{12}'(\xi)\\
P_{21}(\xi)&P_{22}(\xi)
\end{pmatrix}
\\
&=
4
\beta_1(\xi)
\det
\begin{pmatrix}
P_{21}(\xi)&P_{22}(\xi)\\
P_{21}'(\xi)&P_{22}'(\xi)
\end{pmatrix}
.
\end{align*}
Then, 
\begin{align*}
\beta_1(\xi)=\frac{1}{4\det\begin{pmatrix}p_1(\xi)\\p_2(\xi)\end{pmatrix}}\phi(p_1(\xi),p_2(\xi)). 
\end{align*}
The functions $\beta_2(\xi)$, $\beta_3(\xi)$ and $\beta_4(\xi)$ are obtained by similar calculation. 
\end{proof}
\begin{remark}
The null curve equation $(\beta_1'(\xi))^2+(\beta_2'(\xi))^2-(\beta_3'(\xi))^2-(\beta_4'(\xi))^2=0$ 
provides alternative representation of equation \eqref{eq:P}, for example, 
\begin{align*}
P_{11}(\xi)=&-2\{-(\beta_{1}(\xi)-\beta_{3}(\xi))(\beta_{1}'(\xi)+\beta_{3}'(\xi))\\
&-(\beta_2(\xi)+\beta_4(\xi))(\beta_{2}'(\xi)-\beta_{4}'(\xi))\}k(\xi),\\
P_{12}(\xi)=&-2\{(\beta_{2}(\xi)-\beta_{4}(\xi))(\beta_{2}'(\xi)+\beta_{4}'(\xi))\\
&+(\beta_1(\xi)+\beta_3(\xi))(\beta_{1}'(\xi)-\beta_{3}'(\xi))\}k(\xi),\\
P_{21}(\xi)=&-(\beta_{1}'(\xi)+\beta_{3}'(\xi))k(\xi),\\
P_{22}(\xi)=&(\beta_{2}'(\xi)-\beta_{4}'(\xi))k(\xi). 
\end{align*}
\end{remark}

\begin{proof}[Proof of Corollary \ref{cor:null3}]
In the formula in Theorem~\ref{thm:repnc}, $\beta_4=0$ is equivalent to the equation
\begin{align*}
&-
\det\begin{pmatrix}
P_{11}(\xi)&P_{12}(\xi)\\
P_{22}'(\xi)&P_{21}'(\xi)
\end{pmatrix}
+
\det\begin{pmatrix}
P_{11}'(\xi)&P_{12}'(\xi)\\
P_{22}(\xi)&P_{21}(\xi)
\end{pmatrix}
=0,
\end{align*}
We regard this equation as the differential equation
\begin{align*}
P_{21}(\xi)P_{11}'(\xi)-P_{21}'(\xi)P_{11}(\xi)=P_{12}'(\xi)P_{22}(\xi)-P_{12}(\xi)P_{22}'(\xi)
\end{align*}
with respect to $P_{11}(\xi)$. 
Since $P_{21}\neq 0$,  
\begin{align*}
P_{11}(\xi)=P_{21}(\xi)\left(\int_{\xi_0}^\xi\frac{P_{12}'(\xi)P_{22}(\xi)-P_{12}(\xi)P_{22}'(\xi)}{(P_{21}(\xi))^2}d\xi+C\right)
\end{align*}
is the solution to the equation. 
\end{proof}

\section{Examples}
We construct null curves by Theorem~\ref{thm:repnc} and Corollary \ref{cor:null3}
and construct minimal timelike surfaces by Corollary~\ref{cor:repmt} and Corollary~\ref{cor:repmt3}. 
\begin{example}
Let $p$, $q$, $r$, $s\in\mathbb{R}$ with $r\neq s$ and 
\begin{align*}
P_{11}(\xi)=e^{p\xi},\, P_{12}(\xi)=e^{q\xi},\, P_{21}(\xi)=e^{r\xi},\, P_{22}(\xi)=e^{s\xi}.  
\end{align*}
Then, 
\begin{align*}
&\alpha_1(\xi)
=
\frac{1}{4(s-r)}
\begin{pmatrix}
(p-s)e^{(p-r)\xi}+(r-q)e^{(q-s)\xi}\\
(p-r)e^{(p-s)\xi}+(q-s)e^{(q-r)\xi}\\
(p-s)e^{(p-r)\xi}-(r-q)e^{(q-s)\xi}\\
(p-r)e^{(p-s)\xi}-(q-s)e^{(q-r)\xi}
\end{pmatrix}
\end{align*}
is a null curve in $\mathbb{E}^4_2$. 
\end{example}
\begin{example}
Let $p$, $q\in\mathbb{R}$ with $q\neq 0$ and 
\begin{align*}
P_{11}(\xi)=\cos p\xi,\, P_{12}(\xi)=\sin p\xi,\, P_{21}(\xi)=\cos q\xi,\, P_{22}(\xi)=\sin q\xi.  
\end{align*}
Then, 
\begin{align*}
&\alpha_2(\xi)
=
\frac{1}{4q}
\begin{pmatrix}
-(p+q)\cos(p-q)\xi\\
-(p+q)\sin(p-q)\xi\\
(p-q)\cos(p+q)\xi\\
-(p-q)\sin(p+q)\xi
\end{pmatrix}
\end{align*}
is a null curve in $\mathbb{E}^4_2$. 
\end{example}
\begin{example}\label{example:e}
Let $q$, $r$, $s\in\mathbb{R}$ with $r\neq s$, $q+s-2r\neq 0$ and 
\begin{align*}
P_{12}(\xi)=e^{q\xi},\, P_{21}(\xi)=e^{r\xi},\, P_{22}(\xi)=e^{s\xi},\,\xi_0=0,\,C=0.  
\end{align*}
Then, 
\begin{align*}
P_{11}(\xi)=\frac{q-s}{q+s-2r}e^{(q+s-r)\xi}-\frac{q-s}{q+s-2r}e^{r\xi}
\end{align*}
and 
\begin{align*}
&\alpha_3(\xi)
=
\frac{1}{4}
\begin{pmatrix}
\alpha_{13}(\xi)\\
\alpha_{23}(\xi)\\
\alpha_{33}(\xi)
\end{pmatrix},\\
&\alpha_{13}(\xi)=\frac{(q-s)(q-r)}{(q+s-2r)(s-r)}e^{(q+s-2r)\xi}+\frac{q-s}{q+s-2r}+\frac{r-q}{s-r}e^{(q-s)\xi},\\
&\alpha_{23}(\xi)=\frac{2(q-s)}{s-r}e^{(q-r)\xi},\\
&\alpha_{33}(\xi)=\frac{(q-s)(q-r)}{(q+s-2r)(s-r)}e^{(q+s-2r)\xi}+\frac{q-s}{q+s-2r}-\frac{r-q}{s-r}e^{(q-s)\xi}
\end{align*}
is a null curve in $\mathbb{E}^3_1$. 
\end{example}
\begin{example}
Let $\alpha_4$ be the null curve where $q=4$, $r=2$, $s=1$ in Example~\ref{example:e}. 
Then, 
\begin{align*}
\alpha_4(\xi)=
\begin{pmatrix}
\displaystyle{-\frac{3}{2}e^{\xi}+\frac{3}{4}+\frac{1}{2}e^{3\xi}}\\[10pt]
\displaystyle{-\frac{3}{2}e^{2\xi}}\\[10pt]
\displaystyle{-\frac{3}{2}e^{\xi}+\frac{3}{4}-\frac{1}{2}e^{3\xi}}
\end{pmatrix}. 
\end{align*}
Figure \ref{fig:c2} depicts the curve determined by this parametrization. 
\begin{figure}[h]
  \centering
  \includegraphics[width=0.4\linewidth]{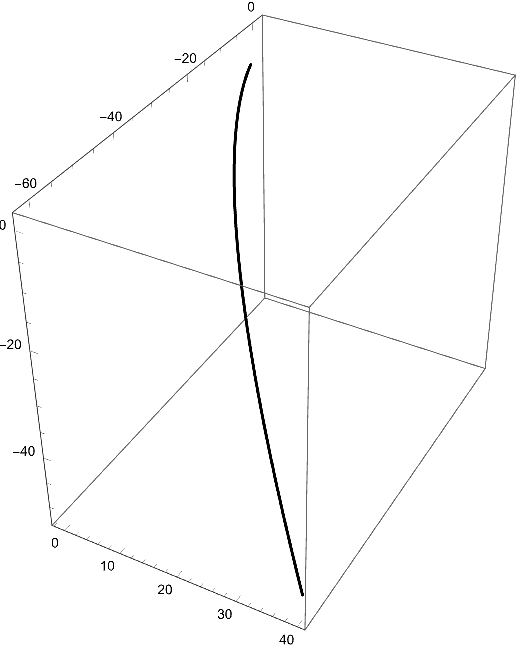}
  \caption{$\alpha_4$}
  \label{fig:c2}
\end{figure}

Let $\tilde{\alpha}_4$ be the null curve where $q=2$, $r=1$, $s=0$ in Example~\ref{example:e}. 
Then, 
\begin{align*}
\tilde{\alpha}_4(\xi)=
\begin{pmatrix}
\displaystyle{\frac{1}{6}e^{3\xi}+\frac{1}{12}-\frac{1}{2}e^{\xi}}\\[10pt]
\displaystyle{\frac{1}{2}e^{2\xi}}\\[10pt]
\displaystyle{\frac{1}{6}e^{3\xi}+\frac{1}{12}+\frac{1}{2}e^{\xi}}
\end{pmatrix}. 
\end{align*}
Figure~\ref{fig:c3} depicts the surface determined by this parametrization. 
\begin{figure}[h]
  \centering
  \includegraphics[width=0.4\linewidth]{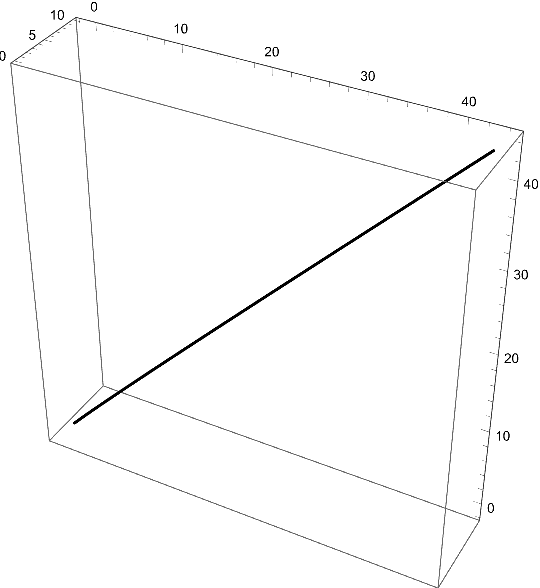}
  \caption{$\tilde{\alpha}_4$}
  \label{fig:c3}
\end{figure}
Then, the map $f_4\colon \mathbb{R}^2\to \mathbb{E}^3_1$, $f_4(\xi_1,\xi_2)=\alpha_4(\xi_1)+\tilde{\alpha}_4(\xi_2)$
is a minimal timelike surface. Figure~\ref{fig:s1} depicts the surface determined by this parametrization. 
\begin{figure}[h]
  \centering
  \includegraphics[width=0.4\linewidth]{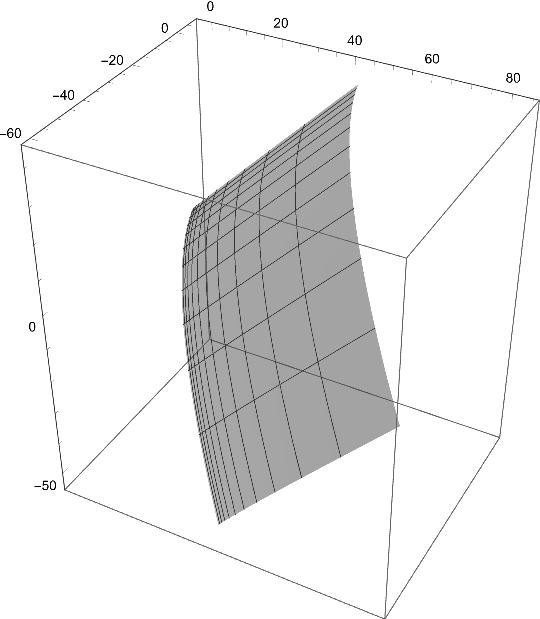}
  \caption{$f_4$}
  \label{fig:s1}
\end{figure}
\end{example}
\begin{example}
Let 
\begin{align*}
P_{12}(\xi)=\sin 2\xi,\, P_{21}(\xi)=\cos\xi,\, P_{22}(\xi)=\sin \xi,\,\xi_0=0,\,C=0.  
\end{align*}
Then, 
\begin{align*}
P_{11}(\xi)=-2(\cos\xi-1)^2
\end{align*}
and 
\begin{align*}
&\alpha_5(\xi)
=
\begin{pmatrix}
\displaystyle{-1+\frac{3}{2}\cos\xi-(\cos\xi)^3}\\[10pt]
-(\sin\xi)^3\\[10pt]
\displaystyle{\frac{3}{2}\cos\xi-1}
\end{pmatrix}
\end{align*}
is a null curve in $\mathbb{E}^3_1$. 
Figure \ref{fig:c1} depicts the curve determined by this equation
\begin{figure}[h]
  \centering
  \includegraphics[width=0.4\linewidth]{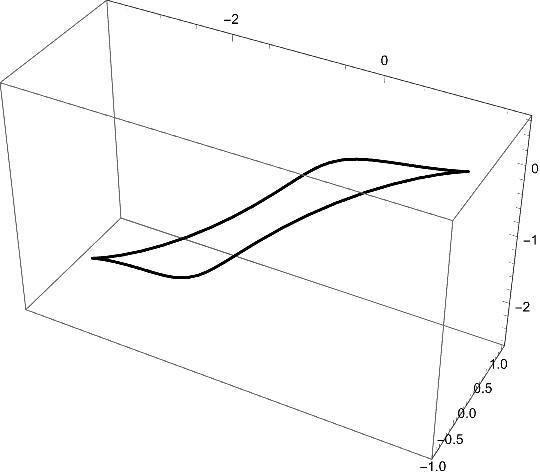}
  \caption{$\alpha_5$}
  \label{fig:c1}
\end{figure}

The curve 
\begin{align*}
&\tilde{\alpha}_5(\xi)
=
\begin{pmatrix}
(\sin\xi)^3\\[10pt]
\displaystyle{-1+\frac{3}{2}\cos\xi-(\cos\xi)^3}\\[10pt]
\displaystyle{\frac{3}{2}\cos\xi-1}
\end{pmatrix}
\end{align*}
is a null curve in $\mathbb{E}^3_1$. 
Figure~\ref{fig:c4} depicts the curve determined by this parametrization. 
\begin{figure}[h]
  \centering
  \includegraphics[width=0.4\linewidth]{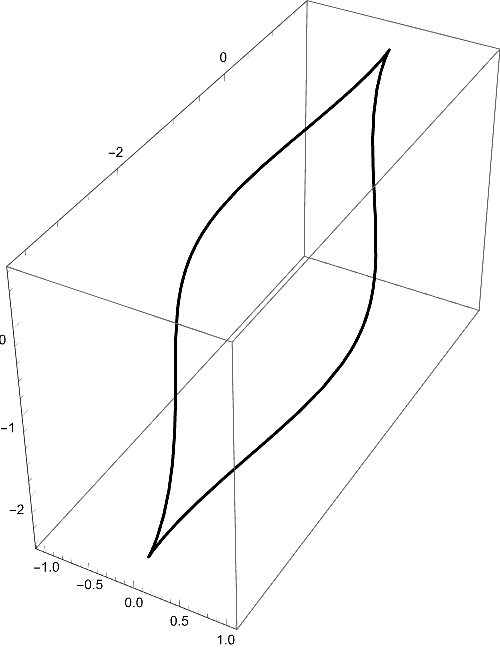}
  \caption{$\tilde{\alpha}_5$}
  \label{fig:c4}
\end{figure}
Then, the map $f_5\colon \mathbb{R}^2\to \mathbb{E}^3_1$, $f_5(\xi_1,\xi_2)=\alpha_5(\xi_1)+\tilde{\alpha}_5(\xi_2)$
is a minimal timelike surface. 
Figure~\ref{fig:s2} depicts the surface determined by this parametrization. 
\begin{figure}
  \centering
  \includegraphics[width=0.4\linewidth]{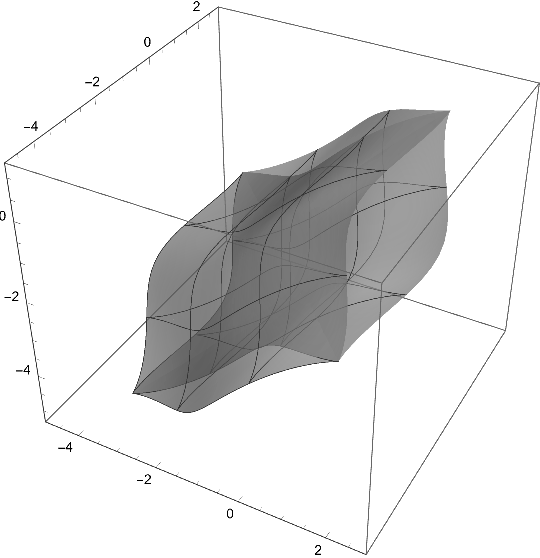}
  \caption{$f_5$}
  \label{fig:s2}
\end{figure}

\end{example}


\begin{thebibliography}{10}

\bibitem{MR0996195}
Budinich, P., Rigoli, M., Cartan spinors, minimal surfaces and strings.
\newblock Nuovo Cimento B (11) \textbf{102}(6) (1988), 609--647.

\bibitem{zbMATH05568272}
Chen, B.-Y., 
Nonlinear {Klein}-{Gordon} equations and {Lorentzian} minimal surfaces in {Lorentzian} complex space forms. 
\newblock Taiwanese J. Math.\textbf{13}(1) (2009), 1--24. 

\bibitem{MR3673665}
Dussan, M. P., Franco~Filho, A. P., Magid, M., The {B}j\"{o}rling problem for
  timelike minimal surfaces in {$\mathbb{R}^4_1$}.
\newblock Ann. Mat. Pura Appl. (4) \textbf{196}(4) (2017), 1231--1249.

\bibitem{zbMATH02630736}
Eisenhart, L. P.,  A fundamental parametric representation of space curves.
\newblock Ann. Math. (2) \textbf{13} (1911), 17--35.


\bibitem{zbMATH02626848}
Eisenhart, L. P., Minimal surfaces in euclidean four-space.
\newblock Amer. J. Math. \textbf{34} (1912), 215--236.

\bibitem{Enneper1864}
Enneper, A., Analytisch-geometrische untersuchungen.
\newblock Zeitschrift f\"{u}r Mathematik und Physik \textbf{9} (1864), 96--125. 

\bibitem{MR4369193}
Kanchev, K., Kassabov, O., Milousheva, V, Explicit solving of the system of
  natural {PDE}s of minimal {L}orentz surfaces in {$\mathbb{R}^4_2$}.
\newblock J. Math. Anal. Appl. \textbf{510}(1) (2022), Paper No. 126017.

\bibitem{MR4169475}
Kassabov, O., Milousheva, V, Weierstrass representations of {L}orentzian minimal
  surfaces in {$\mathbb{R}^4_2$}.
\newblock Mediterr. J. Math. \textbf{17}(6) (2020), Paper No. 199.

\bibitem{MR2141751}
Konderak, J. J, A {W}eierstrass representation theorem for {L}orentz surfaces.
\newblock Complex Var. Theory Appl. \textbf{50}(5) (2005), 319--332.

\bibitem{MR2496508}
Moriya, K., Super-conformal surfaces associated with null complex holomorphic
  curves.
\newblock Bull. Lond. Math. Soc. \textbf{41}(2) (2009), 327--331.

\bibitem{Weierstrass1866}
Weierstass, K., Untersuchungen \"{u}ber die fl\"{a}chen, deren mittlere
  {K}r\"{u}mmung \"{u}berall gleich null ist.
\newblock Moatsber. Berliner Akad.  (1866) 612--625. 
\end{thebibliography}
\end{document}